  \newtheorem{thm}{Theorem}
\newtheorem{lemma}[thm]{Lemma}
\newtheorem{cor}[thm]{Corollary}
\newtheorem{prop}[thm]{Proposition}
\numberwithin{thm}{section}
\newcommand{\Z}{\mathbb{Z}}
\newcommand{\N}{\mathbb{N}}
\newcommand{\bZ}{\mathbb{Z}}
\newcommand{\Sym}{\textup{Sym}}
\newcommand{\Supp}{\textup{Supp}}
\title[MIF and highly transitive overgroups]{Embeddings into highly transitive and mixed identity free groups}
\author{James Hyde\qquad Yash Lodha}
\date{\today}
\begin{document}

\begin{abstract}
Given a countable group $G$, we develop a method to construct an overgroup $H$ that is finitely generated, highly transitive and mixed identity free.
    Our construction can be controlled to ensure that some fundamental group theoretic properties of $G$ are inherited by $H$, such as amenability or the property of not containing a nonabelian free group. The former provides a strong solution to a question of Hull and Osin, and the latter provides the first examples of nonamenable groups without free subgroups that are highly transitive and mixed identity free.
    Our examples also have a nontrivial amenable radical, answering a question of Arzhantseva.
\end{abstract}

\maketitle

\section{Introduction.}

An action of a group $G$ on a set $\Omega$ is \emph{$k$-transitive} if $|\Omega|\geq k$ and for every pair of $k$-tuples $(x_1,...,x_k),(y_1,...,y_k)\in \Omega^k$ where $x_i\neq x_j, y_i\neq y_j$ for $i\neq j$, there is an element $g\in G$ such that $g\cdot x_i=y_i$ for each $1\leq i\leq k$. 
The transitivity degree of a countable group $G$, denoted $\textup{td}(G)$,
is the supremum of all $k\in \N$ such that $G$ admits a $k$-transitive faithful action.
A faithful action is \emph{highly transitive} if it is $k$-transitive for each $k\in \N$.
Infinite groups that admit highly transitive actions are called highly transitive groups.
Equivalently, a countably infinite group
is highly transitive if and only if it embeds as a dense subgroup in the infinite symmetric
group $\Sym(\N)$ endowed with the topology of pointwise convergence.
For the permutation groups $S_n,A_n$, clearly $td(S_n)=n$,
and $td(A_n)=n-2$. It is a consequence of the classification of finite simple groups that any
finite group $G$ other than $S_n$ or $A_n$ has $td(G)\leq 5$ (see \cite{MR1409812}).
In \cite{HullOsin}, Hull and Osin initiated a study of transitivity degree for infinite groups,
where even some basic questions remain open.
A particularly interesting direction of investigation is the question of whether there is a 
reasonable classification of highly transitive groups.
A fundamental observation of Hull and Osin in \cite{HullOsin} is the following dichotomy for a highly transitive faithful action of a group $G$ on a set $X$: either $G$ contains as a normal subgroup $\textup{Alt}_f(X)$, the group of finitely supported alternating permutations of $X$, or $G$ is mixed identity free.

A group $G$ has a \emph{non-trivial mixed identity} if there is a non-trivial word $W\in G\ast \bZ$ such that every homomorphism $G\ast \bZ\to G$ which restricts to the identity map on $G$ maps $W$ to the identity. 
If there is no such $W$, then $G$ is said to be \emph{mixed identity free}.
This is a fundamental property in group theory, and has recently witnessed some surprising applications in $C^*$-algebras (see \cite{amrutam2025strictcomparisonreducedgroup} and \cite{elayavalli2025negativeresolutioncalgebraictarski}), as well as striking connections with the Boone-Higman conjecture \cite{belk2025boonehigmanembeddingsmathrmautfnmapping}.

Given a finitely generated group $G$ that acts faithfully on $\N$, it is straightforward to construct a finitely generated overgroup $H$ that acts highly transitively on $\N$. However, the obvious construction leads to a group $H$ which satisfies a nontrivial mixed identity since it contains $\textup{Alt}_f(\N)$, the group of finitely supported permutations of $\N$. Other natural constructions of such overgroups do not preserve key properties of the group such as amenability or that of not containing a nonabelian free subgroup (for instance, taking a free product with $\Z$ \cite{HullOsin}). 
It is natural to inquire whether one can construct an overgroup $H$ that preserves such a property and is also mixed identity free (equivalently, does not contain a normal subgroup isomorphic to $\textup{Alt}_f(\N)$, see Theorem $5.9$ in \cite{HullOsin}). Note that $\N$ here may denote any countable set the group acts on, we use the natural numbers simply as a model.

The challenge associated with this question is illustrated by the following weaker question posed by Hull and Osin.
In \cite{HullOsin}, they inquire whether there is a finitely generated amenable group that is highly transitive and mixed identity free. 
It turns out that topological full groups of minimal cantor systems provide such examples (see \cite{bradford2023lengthnonsolutionsequationsconstants}). In this article, our first theorem provides a general construction:

\begin{thm}\label{main}
For every finitely generated group $G$, there is a finitely generated group $G\leq H\leq \Sym(G\oplus \Z)$ so that:
\begin{enumerate}
\item There is a short exact sequence: $$1\to N\to H\to (G\oplus \Z)\to 1$$ where $N$ is locally finite.
\item $H$ acts highly transitive on $G\oplus \Z$ via the left regular action.
\item $H$ is mixed identity free.
\end{enumerate}
\end{thm}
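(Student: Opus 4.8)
My plan is to realize $H$ inside $\Sym(\Omega)$, where $\Omega=G\oplus\Z$, as the group generated by the left regular representation $\lambda$ of $G\oplus\Z$ together with a small number of auxiliary ``mixing'' permutations, and to arrange the kernel $N$ to be a \emph{locally finite} group all of whose nontrivial elements have \emph{infinite} support. The infinite‑support requirement is forced: since the action will be highly transitive, the Jordan--Wielandt phenomenon underlying the Hull--Osin dichotomy quoted above shows that a single nontrivial finitely supported element would generate, under conjugation, all of $\textup{Alt}_f(\Omega)$ as a normal subgroup, destroying the MIF property. So the first and most delicate task is to build, for a suitable $(G\oplus\Z)$‑equivariant partition of $\Omega$ into finite blocks, a group $N$ that is a restricted direct product of finite groups (hence locally finite) but realized \emph{diagonally}, so that each of its nontrivial elements acts nontrivially on infinitely many blocks at once. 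Concretely I would group the blocks by cardinality and let a fixed finite group (an alternating group $\textup{Alt}(m)$ on each block of size $m$) act simultaneously on all blocks of that size; taking the restricted product over the sizes yields $N\cong\bigoplus_m \textup{Alt}(m)$, which is locally finite yet has no nontrivial element of finite support. The partition must be chosen compatibly with the shift $\tau$ on the $\Z$‑coordinate and with left translations $\lambda_{s_1},\dots,\lambda_{s_k}$ by a generating set of $G$, so that $\lambda(G\oplus\Z)$ normalizes $N$.

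With this in hand I would set $H=\langle \lambda_{s_1},\dots,\lambda_{s_k},\tau,\sigma\rangle$, where $\sigma$ is one generating mixing permutation; since the translations conjugate $\sigma$ across all the block families, $\langle N,\lambda(G\oplus\Z)\rangle$ is finitely generated and equals $H$. For the short exact sequence $1\to N\to H\to G\oplus\Z\to 1$, I would produce an $N$‑invariant surjection $\Omega\to G\oplus\Z$ intertwining the $H$‑action with the left regular action of $G\oplus\Z$ on itself (collapsing each $N$‑orbit of blocks to the corresponding group element). This induces a homomorphism $\pi\colon H\to G\oplus\Z$ sending the translations to the standard generators and $N$ into the kernel; the routine part is checking $\ker\pi=N$ exactly, which follows because any element killing the block structure acts within blocks and hence lies in the diagonal finite product $N$.

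The crux of the argument is high transitivity of $H$: given two $k$‑tuples of distinct points, I must find an element of $H$ matching them. The translations act regularly, so they alone give only $1$‑transitivity and have trivial point stabilizers; \emph{all} the extra transitivity must therefore be extracted from $N$ together with the translations. The natural strategy is an induction on $k$, using $\lambda(G\oplus\Z)$ to move one point into a preferred block and then using elements of $N$ (whose diagonal alternating action on that block realizes any even permutation of a prescribed finite set) to rearrange the remaining points, with the translations first separating the target points into distinct blocks so that the ``diagonal'' constraint on $N$ does not obstruct independent placement. Here lies the main obstacle: local finiteness forces $N$ to be thin and repetitive (the same permutation is applied to infinitely many blocks simultaneously), whereas high transitivity demands independent control over arbitrary finite configurations. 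Reconciling these is exactly where the interplay with the translations $G\oplus\Z$ must be exploited, and I expect the genuine technical work to be a spreading lemma showing that any finite tuple can be driven by translations into a configuration of distinct blocks on which $N$ then acts with full (alternating) freedom.

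Finally, MIF follows formally once the previous steps are in place. By Steps above $H$ is highly transitive, so by the Hull--Osin dichotomy it suffices to show $\textup{Alt}_f(\Omega)\not\trianglelefteq H$, equivalently that $H$ contains no nontrivial finitely supported permutation. But if $h\in H$ has finite support then $\pi(h)\in G\oplus\Z$ acts with bounded support on $\Omega$; a nontrivial translation is fixed‑point free, so $\pi(h)=1$ and $h\in N$. Since every nontrivial element of $N$ has infinite support by construction, $h=1$. Hence $\textup{Alt}_f(\Omega)$ is not a subgroup of $H$, and the dichotomy yields that $H$ is mixed identity free, completing the three assertions.
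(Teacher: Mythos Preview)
Your proposal has a structural obstruction that blocks high transitivity. You want $N$ to act diagonally on a system of finite blocks, so every $N$-orbit is contained in a single block and is therefore finite. But for $2$-transitivity the stabilizer $H_e$ of a point $e\in\Omega$ must act transitively on the infinite set $\Omega\setminus\{e\}$. Writing an arbitrary $h\in H$ as $n\lambda_g$ with $n\in N$, the condition $h(e)=e$ reads $n(g)=e$, so $g$ lies in the $N$-orbit of $e$; thus $\pi(H_e)\subset G\oplus\Z$ is finite. For any $x$ one has $h(x)=n(gx)$, which lies in the $N$-orbit of $gx$, so $H_e\cdot x\subset\bigcup_{g\in\pi(H_e)}N\cdot(gx)$ is a finite union of finite sets. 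Hence $H_e$ cannot act transitively on $\Omega\setminus\{e\}$, and $H$ is not even $2$-transitive. (Relatedly: since the left regular action is transitive, a $(G\oplus\Z)$-equivariant partition into finite blocks forces all blocks to have a common size, so your direct sum $\bigoplus_m\textup{Alt}(m)$ collapses to a single finite factor anyway.) The ``spreading lemma'' you anticipate is therefore not a technical lacuna but an impossibility for this architecture of $N$.

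The paper avoids block structures entirely. The extra generators are built as pointwise limits $\sigma_s^{(\infty)}=\lim_n\sigma_s^{(n)}$ in $\Sym(G\oplus\Z)$, starting from the transposition $\sigma_s^{(0)}=(id,s)$ and iterating $\sigma_s^{(n+1)}=z^{\nu_{n+1}}\sigma_s^{(n)}z^{-\nu_{n+1}}\cdot\sigma_s^{(n)}$ for a rapidly increasing even sequence $(\nu_n)$. The key mechanism is that the marked groups $(H_n,S_n)$ converge to $(H_\infty,S_\infty)$: each $\Pi_n$ lies in $\Sym_f(G\oplus\Z)$ and is locally finite, and the $\nu_n$ are chosen large enough that the relations witnessing finiteness of each prescribed finite subset of $\Pi_n$ already sit inside the $k_n$-ball, so those relations persist in the limit and $\Pi_\infty$ is locally finite. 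High transitivity is obtained not from a block-by-block alternating action but by showing that suitable products of the $\sigma_s^{(\infty)}$ and their $z$-conjugates restrict on any ball $B_n$ to the single transposition $(f,fs)$, the rest of the support having been pushed outside $B_n$. Your MIF strategy (no nontrivial finitely supported element, then invoke Hull--Osin) matches the paper's, but there the absence of finitely supported elements comes from the built-in self-similarity $\eta_m(W)\cdot z^{\nu_m}=z^{\nu_m}(\eta_m(W)\cdot id)$, which forces any element moving $id$ to also move $z^{\nu_m}$ for all large $m$. The idea you are missing is that $N$ has to be locally finite \emph{without} being assembled from finite groups acting on a fixed invariant block system; the paper achieves this through a marked-group limit rather than through a direct-sum decomposition.
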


We obtain the following as a corollary.

\begin{cor}\label{cor:main1}
The following hold:
\begin{enumerate}
\item Every countable (elementary) amenable group embeds in a finitely generated (elementary) amenable group that is highly transitive and mixed identity free.
\item Every countable group without nonabelian free subgroups embeds in a finitely generated group without nonabelian free subgroups that is highly transitive and mixed identity free.
\end{enumerate}
\end{cor}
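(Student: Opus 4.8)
The plan is to derive Corollary~\ref{cor:main1} from Theorem~\ref{main} by verifying that the two properties in question---(elementary) amenability and the absence of nonabelian free subgroups---are inherited by the overgroup $H$ whenever they hold for $G$. The key structural input is the short exact sequence $1\to N\to H\to (G\oplus\Z)\to 1$ of part (1), together with the fact that $N$ is locally finite. Both target properties are of a type that is preserved under extensions with suitably tame kernels, so the strategy is to reduce each statement to a standard closure property of the relevant class of groups.

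First I would handle amenability. A locally finite group is amenable (indeed elementary amenable), and $\Z$ is amenable, so $G\oplus\Z$ is amenable (resp.\ elementary amenable) as soon as $G$ is. Since the class of amenable groups is closed under extensions, the sequence in (1) shows that $H$ is amenable whenever $G$ is. For the elementary amenable case one argues identically, using that the class $\mathrm{EG}$ of elementary amenable groups is closed under taking extensions and contains all locally finite and all abelian groups; thus $N\in\mathrm{EG}$ and $G\oplus\Z\in\mathrm{EG}$ force $H\in\mathrm{EG}$. Combined with the embedding $G\hookrightarrow H$ and clauses (2)--(3) of Theorem~\ref{main}, this yields part (1) of the corollary: the finitely generated group $H$ is (elementary) amenable, highly transitive, and mixed identity free.

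For part (2), suppose $G$ has no nonabelian free subgroup; I must show the same for $H$. The natural approach is by contradiction: let $F\leq H$ be a nonabelian free subgroup, and consider its image $\ol{F}$ under the quotient map $q\colon H\to G\oplus\Z$. The kernel $F\cap N$ is a subgroup of the locally finite group $N$, hence locally finite, but a nontrivial subgroup of a free group is itself free and torsion-free, so a locally finite subgroup of $F$ must be trivial; therefore $q$ restricts to an isomorphism $F\cong\ol{F}\leq G\oplus\Z$. This exhibits a nonabelian free subgroup of $G\oplus\Z$. Finally, $G\oplus\Z$ contains no nonabelian free subgroup: any free subgroup would project to free subgroups of the two direct factors with the subgroup embedding into the product of its projections, and since the $\Z$ factor is abelian while $G$ has no nonabelian free subgroup by hypothesis, the result is a subgroup of a product of virtually abelian-or-free-of-rank-$\le1$ pieces, which cannot be nonabelian free. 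This contradiction shows $H$ has no nonabelian free subgroup.

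The step I expect to require the most care is the very last one---verifying that $G\oplus\Z$ inherits the absence of nonabelian free subgroups from $G$. The cleanest route is to recall that the class of groups with no nonabelian free subgroup is closed under extensions: given $1\to A\to E\to B\to 1$ with $A,B$ in the class, a free subgroup $F\leq E$ meets $A$ in a free (hence in this context trivial or cyclic) normal subgroup and surjects onto a free subgroup of $B$, forcing $F$ to be cyclic or abelian. Applying this to $1\to G\to G\oplus\Z\to\Z\to 1$ reduces the claim to the hypothesis on $G$ and the fact that $\Z$ has no nonabelian free subgroup. I would cite this closure property rather than reprove it, so the main genuine content of the corollary lies entirely in the interplay between the locally finite kernel $N$ and the torsion-freeness of free groups, which is precisely what makes the quotient map injective on any free subgroup of $H$.
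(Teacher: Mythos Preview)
Your argument that $H$ inherits (elementary) amenability and the absence of nonabelian free subgroups from $G$ via the short exact sequence with locally finite kernel is correct, and this is precisely the mechanism the paper relies on---no separate proof of Corollary~\ref{cor:main1} is written out; it is simply stated as an immediate consequence of Theorem~\ref{main}.

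There is, however, a gap. Theorem~\ref{main} requires its input $G$ to be \emph{finitely generated}, whereas the corollary begins with an arbitrary \emph{countable} group, and you have not addressed this reduction. To complete the proof you must first embed the given countable group into a finitely generated group with the same property, and only then invoke Theorem~\ref{main}. This preliminary step is classical---for instance, Neumann--Neumann type wreath-product embeddings place any countable $G$ inside a $2$-generated group that is an extension of a direct sum of copies of $G$ by a solvable group, so (elementary) amenability and the absence of nonabelian free subgroups are all preserved---but it is a nontrivial ingredient that should be named. The paper also elides this point, so you are in good company, but as written your proof only establishes the corollary for finitely generated inputs. (A minor aside: in your sketch of closure under extensions for the ``no $F_2$'' class, the image of $F$ in $B$ need not be free; the correct dichotomy is that $F\cap A$ is trivial, so $F\hookrightarrow B$, or $F\cap A\cong\Z$ is normal in the free group $F$, forcing $F$ to be cyclic. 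Since you intend to cite this closure property rather than reprove it, this does not affect your argument.)
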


The von Neumann-Day problem asks whether there exist nonamenable groups 
without nonabelian free subgroups.
This was solved by Olshanskii (\cite{VNDAYFIRST}) and subsequently, many other counterexamples emerged (see \cite{OlshanskiiSapir},\cite{MonodPNAS}, \cite{LodhaMoore}).
It is natural to inquire whether there exist counterexamples that are highly transitive and mixed identity free.
Note that Tarski monsters and Burnside groups are not highly transitive (see \cite{HullOsin}), nor are the piecewise projective groups constructed in \cite{MonodPNAS}, \cite{LodhaMoore} or their variants (in fact they also satisfy a nontrivial mixed identity). 
As an application of our Theorem, we provide the first such examples.



\begin{cor}\label{cor:main2}
There exist (continuum many, up to isomorphism) finitely generated nonamenable groups without nonabelian free subgroups that are mixed identity free and highly transitive. Moreover, these groups also have a nontrivial amenable radical and hence are not $C^*$-simple.
\end{cor}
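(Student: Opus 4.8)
I would prove Corollary~\ref{cor:main2} by feeding carefully chosen inputs into the machine of Theorem~\ref{main} and reading the five required properties off its conclusions, using Corollary~\ref{cor:main1}(2) to keep the absence of free subgroups. The plan is as follows. First I would fix the input family: by the solution to the von Neumann--Day problem (\cite{VNDAYFIRST}) there exist finitely generated nonamenable groups without nonabelian free subgroups, and in fact Olshanskii's constructions produce a continuum-sized family $\{G_\lambda\}_{\lambda\in 2^{\N}}$ of pairwise non-isomorphic such groups (for instance Tarski monsters, which are $2$-generated, nonamenable, and torsion, so contain no infinite cyclic and hence no nonabelian free subgroup). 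Securing this continuum of pairwise non-isomorphic base groups is the only genuinely external input; everything afterward is assembly.

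For each $\lambda$ I would apply the construction of Theorem~\ref{main} to the finitely generated group $G_\lambda$, obtaining a finitely generated $H_\lambda$ with $G_\lambda\le H_\lambda\le \Sym(G_\lambda\oplus\Z)$, a short exact sequence $1\to N_\lambda\to H_\lambda\to G_\lambda\oplus\Z\to 1$ with $N_\lambda$ locally finite, a highly transitive action on $G_\lambda\oplus\Z$, and the mixed identity free property. By Corollary~\ref{cor:main1}(2) the construction can be controlled so that $H_\lambda$ has no nonabelian free subgroup. Since $G_\lambda\le H_\lambda$ is nonamenable and amenability passes to subgroups, $H_\lambda$ is nonamenable. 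At this point each $H_\lambda$ is already finitely generated, nonamenable, free-subgroup-free, highly transitive, and mixed identity free.

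Next I would establish the nontrivial amenable radical. The normal subgroup $N_\lambda$ is locally finite, hence amenable, so it suffices to check $N_\lambda\neq 1$. Here I would argue that the quotient $G_\lambda\oplus\Z$ acts on itself only by the left regular action, whose point stabilizers are trivial and which is therefore not $2$-transitive; but $H_\lambda$ acts $2$-transitively, being highly transitive. Were $N_\lambda$ trivial, $H_\lambda$ would coincide with $G_\lambda\oplus\Z$ acting regularly, a contradiction. Thus $N_\lambda$ is a nontrivial normal amenable subgroup, so the amenable radical of $H_\lambda$ is nontrivial, and since every $C^*$-simple group has trivial amenable radical, $H_\lambda$ is not $C^*$-simple. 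I expect this step to be the most delicate conceptually, though once the short exact sequence of Theorem~\ref{main} is in hand it reduces to the transitivity comparison above.

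Finally I would count isomorphism types by the standard fiber argument: each $H_\lambda$ is countable, so it contains only countably many isomorphism classes of finitely generated subgroups (a countable group has countably many finite subsets, hence countably many finitely generated subgroups). Consequently, for a fixed isomorphism type $[H]$ the set $\{\lambda : H_\lambda\cong H\}$ injects, via $\lambda\mapsto$ the isomorphism class of the embedded subgroup $G_\lambda\le H$, into the countable set of finitely generated subgroup types of $H$; since the $G_\lambda$ are pairwise non-isomorphic this fiber is countable. A continuum-sized index set partitioned into countable fibers yields continuum many pairwise non-isomorphic $H_\lambda$, completing the argument.
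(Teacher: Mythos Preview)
Your argument is correct and supplies exactly what the paper omits: Corollary~\ref{cor:main2} is stated without proof, as an evident consequence of Theorem~\ref{main} applied to Olshanskii's continuum of finitely generated nonamenable groups without free subgroups, together with the standard countable-fibre counting argument you give. Two small streamlinings are available. First, rather than invoking Corollary~\ref{cor:main1}(2), the absence of nonabelian free subgroups in $H_\lambda$ follows directly from the short exact sequence of Theorem~\ref{main}(1): any nonabelian free subgroup of $H_\lambda$ would meet the locally finite kernel $N_\lambda$ trivially (free groups are torsion-free) and hence embed in $G_\lambda\oplus\Z$, which has none. Second, for $N_\lambda\neq 1$ you can sidestep parsing the phrase ``via the left regular action'' altogether: if $N_\lambda=1$ then $H_\lambda\cong G_\lambda\oplus\Z$ has nontrivial centre, so it satisfies the mixed identity $[x,z]$, contradicting Theorem~\ref{main}(3); alternatively, a faithful $2$-transitive action on a set with at least three points forces trivial centre, contradicting Theorem~\ref{main}(2).
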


The above Corollary also answers a question of Arzhantseva (posed at the MF Oberwolfach workshop ``$C^\ast$-algebras’’, August 2025 \cite{Arzhantseva_personal}): Does there exist a nonamenable group $G$ such that $G$ is mixed identity free
and $C^*_r(G)$ is not selfless? Since selflessness of $C^*_r(G)$ implies $C^*$-simplicity of $G$ (see \cite{ozawa2025proximalityselflessnessgroupcalgebras}), the previous Corollary answers the question in the affirmative.

Our method of construction provides a new method of building amenable limits in the Grigorchuk space of marked groups, 
and this may find applications elsewhere.
This space has emerged to be quite useful for providing all kinds of new constructions and results (see \cite{MR4286046}, \cite{MR3644759}, \cite{MR4630775} for instance).

\section{The construction.}
Let $G_0$ be a finitely generated amenable group with a finite generating set $S_0$. We let: $$G=G_0\oplus (\bZ=\langle z\rangle)\qquad S=S_0\cup \{z\}$$
Let $\Gamma(G,S)$ be the respective Cayley graph with vertex set $G$ and edge set $\{(g,gs)\mid g\in G,s\in S\}$. This is endowed with the standard path metric, and for $g\in G$, we denote the usual word norm as $|g|_S$. $G$ acts by left multiplication on $\Gamma(G,S)$ by isometries. We denote by $B_n(\Gamma(G,S))$ the $n$-ball centered around the identity in the Cayley graph and by $B_n^v(\Gamma(G,S))$ we denote the set of elements in $G$ whose $S$-word norm is at most $n$ (i.e. the vertices in $B_n(\Gamma(G,S))$).

Recall that $G$ embeds in $\Sym(G)$ via the left regular action. $\Sym(G)$ is endowed with the topology of pointwise convergence. Given an element $f\in \Sym(G)$, the support of the element, or $\Supp(f)$ is the set of elements of $G$ that are moved by $f$.
We say that $f,g\in \Sym(G)$ are \emph{disjoint} if they have disjoint supports. 
Given an infinite collection of pairwise disjoint permutations $\{f_{m}\}_{m\in \N}$, we can consider their product $\prod_{m\in \N}f_{m}$ which is the limit $\lim_{n\to \infty}f_1...f_n$ in the topology of pointwise convergence. 
We denote by $\Sym_f(G)$ to be the set of finitely supported permutations.
An element $f\in \Sym(G)$ is called a \emph{involution} if it is an element of order two, or equivalently, in cycle notation it is a (possibly infinite) product of disjoint transpositions.

A key starting point of our construction is the following Lemma. 

\begin{lemma}\label{lemma:largecommutator}
Let $G,S,z$ be as above. Let $f_1,...,f_m\in \Sym_f(G)$ be involutions. 
Let $$S_1=S\cup \{f_1,...,f_m\}\qquad K_1=\langle S_1\rangle$$
For each $n\in \N$, we can choose a sufficiently large $\nu_n\in \N$ and define:
$$g_i=z^{\nu_n}f_iz^{-\nu_n} f_i\qquad S_2=S\cup \{g_1,...,g_m\}\qquad K_2=\langle S_2\rangle$$
such that the following holds:
\begin{enumerate}
\item $B_n(\Gamma(K_1,S_1))\cong B_n(\Gamma(K_2,S_2))$.
\item $g_1,...,g_m$ are involutions in $\Sym_f(G)$.
\item For $f\in B_n^v(\Gamma(K_1,S_1))$, we have that $f\cdot z^{\nu_n}=z^{\nu_n}(f\cdot id)$.
\end{enumerate}
\end{lemma}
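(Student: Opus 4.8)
The plan is to exploit the one structural fact that each $f_i$ is finitely supported. Write $G=G_0\oplus\langle z\rangle$, recall $z$ is central, and set $D=\bigcup_{i}\Supp(f_i)$; as $D$ is finite there is $R$ so that every point of $D$ has $\langle z\rangle$-coordinate in $[-R,R]$. Among the letters of $S_1$ only $z^{\pm1}$ change the $\langle z\rangle$-coordinate of a point, each by $\pm1$; the letters of $S_0$ preserve it, and each $f_i$ fixes every point outside $D$. Hence a word of length $\le N$ whose trajectory starts at $\langle z\rangle$-coordinate $>R+N$ never meets $D$, so all its $f_i$-letters act trivially and it acts by left translation by its \emph{$G$-part}---the product of its $S$-letters after deleting every $f_i$-letter (this is well defined on elements, since a word acting trivially has trivial $G$-part, as one sees from its behaviour far out). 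For the given $n$ I fix $\nu_n\gg R+2n$, the precise size dictated by step (1).

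Steps (2) and (3) are then short. For (2): $a_i:=z^{\nu_n}f_iz^{-\nu_n}$ is an involution with support $z^{\nu_n}\cdot\Supp(f_i)$ of $\langle z\rangle$-coordinate in $[\nu_n-R,\nu_n+R]$, disjoint from $\Supp(f_i)$ once $\nu_n>2R$; disjoint involutions commute and a product of commuting involutions is an involution, so $g_i=a_if_i\in\Sym_f(G)$ is a finitely supported involution. For (3): given $f\in B^v_n(\Gamma(K_1,S_1))$ write it as a word of length $\le n$ and feed it the far point $z^{\nu_n}$; the trajectory keeps $\langle z\rangle$-coordinate $\ge\nu_n-n>R$, so by the mechanism above no $f_i$ triggers and $f\cdot z^{\nu_n}$ equals the $G$-part of $f$ applied to $z^{\nu_n}$, i.e.\ $z^{\nu_n}(f\cdot\mathrm{id})$ by centrality of $z$.

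For the ball isomorphism (1) I work in the common cover $F=G\ast(\bZ/2\bZ)^{\ast m}$, generated by $S$ together with involutions $t_1,\dots,t_m$, with its retraction $\rho\colon F\to G$ killing the $t_i$, and the two representations $\phi_1,\phi_2\colon F\to\Sym(G)$ sending $G$ by left multiplication and $t_i\mapsto f_i$, respectively $t_i\mapsto g_i$. The labelled $n$-ball of each Cayley graph is determined by which words $w$ with $|w|\le 2n$ have $\phi_j(w)=\mathrm{id}$, so it is enough to show $\phi_1(w)=\mathrm{id}\iff\phi_2(w)=\mathrm{id}$ for all such $w$. Split $G$ into the near region around $D$, the far region $z^{\nu_n}\cdot(\text{near})$, and the rest. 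On the near region the far part $a_i$ of $g_i$ is out of reach of a length-$\le2n$ word, so $\phi_2(w)$ and $\phi_1(w)$ agree; on the far region only the $a_i$ act, and since $z$ is central, conjugation by $z^{\nu_n}$ carries $\phi_1(w)$ to the corresponding word in the $a_i$, so $\phi_2(w)$ acts there as $z^{\nu_n}\phi_1(w)z^{-\nu_n}$; elsewhere both act as translation by $\rho(w)$. Reading off triviality region by region yields $\phi_2(w)=\mathrm{id}\iff[\,\phi_1(w)\text{ trivial on the near region and }\rho(w)=e\,]\iff\phi_1(w)=\mathrm{id}$, which is (1).

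The genuine obstacle is (1): one must ensure that inserting the far copies $a_i$ into the generators creates no new short relation and kills no old one. This is exactly where separating the near and far regions by more than the word length is indispensable, and where (3), together with its conjugate on the far region, does the work; fixing how large $\nu_n$ must be and checking that one $\nu_n$ serves simultaneously for every word of length $\le2n$ and all three regions is the delicate point, in contrast to the routine involution bookkeeping behind (2) and (3).
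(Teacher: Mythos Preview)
Your arguments for (2) and for the ball isomorphism (1) are correct and run along the same lines as the paper's, your three-region decomposition for (1) being a more explicit rendering of the paper's one-sentence observation that a short $S_1$-word near $\mathrm{id}$ acts like the conjugated word near $z^{\nu_n}$.

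There is, however, a genuine gap in your argument for (3). You correctly show that for $f\in B_n^v(\Gamma(K_1,S_1))$ the trajectory from $z^{\nu_n}$ never meets $D$, so $f\cdot z^{\nu_n}$ equals the $G$-part $\rho(f)$ applied to $z^{\nu_n}$, i.e.\ $z^{\nu_n}\rho(f)$ by centrality. But you then write ``i.e.\ $z^{\nu_n}(f\cdot\mathrm{id})$'', silently identifying $\rho(f)$ with $f\cdot\mathrm{id}$. That identification fails whenever $\mathrm{id}\in D$, which is not excluded and is in fact the situation in the paper's application (the base step has $f_i=\sigma_s^{(0)}=(\mathrm{id},s)$): for $f=f_1=(\mathrm{id},s)$ one gets $\rho(f_1)=e$ but $f_1\cdot\mathrm{id}=s$, so $f_1\cdot z^{\nu_n}=z^{\nu_n}\neq z^{\nu_n}s$, and (3) taken literally for $K_1$ is simply false. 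Comparing with how the conclusion is used later (the analogue in Lemma~\ref{Lem:Construction2}(3) is stated for $H_{n+1}$, i.e.\ the $K_2$ side), the intended assertion is for $f\in B_n^v(\Gamma(K_2,S_2))$. For that version your own near/far machinery already supplies the proof: near $z^{\nu_n}$ each $g_i$ acts as $a_i=z^{\nu_n}f_iz^{-\nu_n}$, so the $S_2$-word $f$ satisfies $f\cdot z^{\nu_n}=z^{\nu_n}(\tilde f\cdot\mathrm{id})$ with $\tilde f$ the corresponding $S_1$-word, while near $\mathrm{id}$ each $g_i$ acts as $f_i$, giving $\tilde f\cdot\mathrm{id}=f\cdot\mathrm{id}$. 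This is precisely the mechanism the paper's proof sketch is pointing at.
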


\begin{proof}

Let $X_i$ be the (finite) support of $f_i$.
Since $z$ is an infinite order central element of $G$, for large $\nu_n$, the conjugate $z^{\nu_n}f_iz^{-\nu_n}$ is an involution whose support in $G$ (which equals $z^{\nu_n}\cdot \Supp(f_i)$) lies far from the identity in $\Gamma(G,S)$.
In particular, when: $$(z^{\nu_n}\cdot \Supp(f_i))\cap \Supp(f_i)=\emptyset$$ we have that $g_i=z^{\nu_n}f_iz^{-\nu_n} f_i$ is an involution.
The action of the permutation given by some word of the form $x_1y_1...x_ly_l$ for $2l\leq n$ where $$x_i\in S\cup\{id\}, y_i\in \{f_1^{\pm 1},...,f_m^{\pm 1}, id\}$$ on a neighborhood of the identity in $\Gamma(G,S)$ agrees with that of $$x_1(z^{\nu_n}y_1z^{-\nu_n})...x_l(z^{\nu_n}y_lz^{-\nu_n})$$ around the same size neighborhood centered at $z^{\nu_n}$ in $\Gamma(G,S)$, since $G$ acts on $\Gamma(G,S)$ homogeneously. 
This means that for a sufficiently large $\nu_n$, our claim holds.
\end{proof}

To construct our group, we will inductively define three sequences.
Two strictly increasing sequences of natural numbers $(\nu_n)_{n\in \N}, (k_n)_{n\in \N}$ and a sequence of involutions: $$(\sigma_s^{(n)})_{n\in \N}\subset \Sym_f(G)\qquad \text{ for each }s\in S$$ We denote: $$S_n=S\cup \{\sigma_s^{(n)}\mid s\in S\})\qquad H_n=\langle S_n\rangle$$
Moreover, for each $n\in \N$ we denote by $\Pi_n$ as the 
the normal closure of $\{\sigma_s^{(n)}\mid s\in S\}$ in $H_n$.
It will be demonstrated that for each $s\in S$, the limit $\lim_{n\to \infty} \sigma_n^{(n)}$ exists and will be denoted by $\sigma_s^{(\infty)}$.
We will introduce the generating set and the group: 
$$S_{\infty}=S\cup \{\sigma_s^{(\infty)}\mid s\in S\}\qquad H_{\infty}=\langle S_{\infty}\rangle$$ which will be our desired group.
Moreover, the group generated by: $$\{f^{-1}\sigma_s^{(\infty)} f\mid f\in H, s\in S\}$$ will be denoted as $\Pi_{\infty}$.

We introduce a proxy variable $\sigma_s$ for each $s\in S$,
and consider the set: $$P=S\cup \{\sigma_s\mid s\in S\}$$ and the group $F(P)$ that is freely generated by $P$.
For each reduced word $W$ in the generating set $P$ and $n\in \N\cup \{\infty\}$, we denote by $\xi_n(W)$ as the word in the generating set $S_n$ obtained from replacing each occurrence of $(\sigma_s)^{\pm 1}$ by $(\sigma_s^{(n)})^{\pm 1}$ and keeping the $S$-letters the same.
If $\mathcal{W}$ is a collection of words in the generating set $P$, then for $n\in \N\cup \{\infty\}$ we define: $$\xi_n(\mathcal{W})=\{\xi_n(W)\mid W\in \mathcal{W}\}$$
Let $\Pi$ be the normal closure of $\{\sigma_s\mid s\in S\}$ in $F(P)$.
Let $(X_n)_{n\in \N}$ be an increasing collection of finite subsets of elements of the group $\Pi$ which exhaust the group.

Now we are ready to build our group.
Let $\sigma_s^{(0)}$ be the involution $(id,s)$ and $k_0=\nu_0=0$. 
Assume that $\{\sigma_s^{(n)}\}_{s\in S},k_n$ and $\nu_n$ have been defined such that:
\begin{enumerate}
\item For each $p<q\leq n$, we have a natural marked isomorphism: 
$$B_{k_{p}}(\Gamma(H_{q},S_q))\cong B_{k_{p}}(\Gamma(H_p,S_p))$$
\item For each $1\leq p\leq n$, $H_p$ is a subgroup of $\langle G,\Sym_f(G)\rangle\cong \Sym_f(G)\rtimes G$. 
\item For each $1\leq p\leq n$, the group $\Pi_p$ is a subgroup of $\Sym_f(G)$, hence it is locally finite. 
Moreover, $k_p$ is also chosen so that $B_{k_{p}}(\Gamma(H_p,S_p))$ contains the relations witnessing that the finitely generated group $\langle \xi_n(X_n)\rangle$ is finite.
\item For each $1\leq p\leq n$, $f\in B_{k_{p-1}}^v(\Gamma(H_{p},S_{p}))$, we have that $f\cdot z^{\nu_{p}}=z^{\nu_{p}}(f\cdot id)$.
\item $k_1<...<k_n$, $\nu_1<...<\nu_n$ and each $\nu_i$ is an even number.
\end{enumerate}
Note that for the base case when $n=0$, this holds trivially.
We will demonstrate the inductive step.

\begin{lemma}\label{Lem:Construction2}
We can choose $\nu_{n+1}\in 2\N$ satisfying that $\nu_{n+1}>\nu_n$, and define:
$$\sigma_s^{(n+1)}=z^{\nu_{n+1}}\sigma_s^{(n)}z^{-\nu_{n+1}}\sigma_s^{(n)}\qquad S_{n+1}=S\cup \{\sigma_s^{(n+1)}\mid s\in S\}\qquad H_{n+1}=\langle S_{n+1}\rangle $$
so that: 
\begin{enumerate}
\item For each $l\leq n$ there is a natural marked isomorphism: $$B_{k_l}(\Gamma(H_{n+1},S_{n+1}))\cong B_{k_l}(\Gamma(H_l,S_l))$$
\item $\sigma_s^{(n+1)}$ is an involution.
\item For $f\in B_{k_{n}}^v(\Gamma(H_{n+1},S_{n+1}))$, we have that $f\cdot z^{\nu_{n+1}}=z^{\nu_{n+1}}(f\cdot id)$.

\end{enumerate}
Furthermore, we can choose $k_{n+1}\in \N$ such that $k_{n+1}>k_n$ and:
\begin{enumerate}
\item[(4)] $B_{k_{n+1}}(\Gamma(H_{n+1},S_{n+1}))$ contains the relations witnessing that the finitely generated group $\langle \xi_{n+1}(X_{n+1})\rangle$ is finite.
\end{enumerate}
\end{lemma}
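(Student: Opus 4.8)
The plan is to establish parts (1)--(3) first, using Lemma~\ref{lemma:largecommutator} as the engine, and then to handle part (4) as a separate finiteness bookkeeping step. I would begin by instantiating Lemma~\ref{lemma:largecommutator} with $f_i = \sigma_s^{(n)}$ as $s$ ranges over $S$, taking $K_1 = H_n$ with generating set $S_1 = S_n$. The lemma produces, for each target radius, a sufficiently large $\nu_{n+1}$ so that $g_s := z^{\nu_{n+1}}\sigma_s^{(n)}z^{-\nu_{n+1}}\sigma_s^{(n)} = \sigma_s^{(n+1)}$ is again a finitely supported involution (this is exactly part (2)), and so that the $k_n$-ball of the new Cayley graph $\Gamma(H_{n+1},S_{n+1})$ is marked-isomorphic to the $k_n$-ball of $\Gamma(H_n,S_n)$. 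I would then choose $\nu_{n+1}$ even and larger than $\nu_n$, which is permitted since the lemma only requires $\nu_{n+1}$ large. Property (3) of the lemma, applied with $n$ replaced by $k_n$, gives exactly the commutation statement in part (3) above: for $f$ in the word-$k_n$-ball, $f \cdot z^{\nu_{n+1}} = z^{\nu_{n+1}}(f\cdot id)$.

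For part (1), the single-step isomorphism $B_{k_n}(\Gamma(H_{n+1},S_{n+1})) \cong B_{k_n}(\Gamma(H_n,S_n))$ comes directly from part (1) of the lemma. To upgrade this to ``for each $l \le n$'' I would compose with the isomorphisms already provided by the inductive hypothesis (property (1) of the induction: $B_{k_p}(\Gamma(H_q,S_q)) \cong B_{k_p}(\Gamma(H_p,S_p))$ for $p < q \le n$). Since $k_l \le k_n$ for $l \le n$, the $k_l$-ball sits inside the $k_n$-ball, so restricting the new isomorphism to radius $k_l$ and then applying the inductive chain yields the marked isomorphism $B_{k_l}(\Gamma(H_{n+1},S_{n+1})) \cong B_{k_l}(\Gamma(H_l,S_l))$. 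The fact that these isomorphisms are \emph{marked} (respecting the labeling by generators) is what makes the composition coherent and is inherited from the homogeneity argument in the proof of Lemma~\ref{lemma:largecommutator}.

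The remaining and most delicate point is part (4): choosing $k_{n+1} > k_n$ so that $B_{k_{n+1}}(\Gamma(H_{n+1},S_{n+1}))$ contains the relations witnessing that $\langle \xi_{n+1}(X_{n+1})\rangle$ is finite. The key observation is that each $\sigma_s^{(n+1)}$ lies in $\Sym_f(G)$, so the subgroup $\Pi_{n+1}$ (the normal closure of the $\sigma_s^{(n+1)}$) is a subgroup of $\Sym_f(G)$ and hence locally finite. Therefore $\langle \xi_{n+1}(X_{n+1})\rangle$, being a \emph{finitely} generated subgroup of $\Pi_{n+1}$, is genuinely finite. I would then argue that a finite group has a finite presentation relative to its generators, so there are finitely many relations witnessing finiteness; each such relation is a word of bounded length in $S_{n+1}$, and by choosing $k_{n+1}$ to exceed the maximum of these lengths (and to exceed $k_n$), the ball $B_{k_{n+1}}(\Gamma(H_{n+1},S_{n+1}))$ contains all of them. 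The main obstacle here is confirming that local finiteness of $\Pi_{n+1}$ (inherited from $\Sym_f(G)$) indeed forces $\langle \xi_{n+1}(X_{n+1})\rangle$ to be finite: this rests on checking that $\xi_{n+1}(X_{n+1})$ is a finite set of elements of $\Pi_{n+1}$, which follows because $X_{n+1}$ is finite and $\xi_{n+1}$ sends each generator $\sigma_s$ to $\sigma_s^{(n+1)} \in \Sym_f(G)$ and each $S$-letter to itself, so the image lands in $H_{n+1} \cap (\text{normal closure}) = \Pi_{n+1}$.

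Finally, I would verify that all five inductive hypotheses are preserved for the index $n+1$, so that the induction can continue: hypotheses (1) and (4) are exactly parts (1) and (4) just proved; hypothesis (2), that $H_{n+1} \le \Sym_f(G) \rtimes G$, holds because the new generators $\sigma_s^{(n+1)}$ are finitely supported permutations together with the elements of $S \subset G$; hypothesis (3), that $\Pi_{n+1} \le \Sym_f(G)$ is locally finite, is the statement established in the analysis of part (4); and hypothesis (5), that $\nu_{n+1}$ is even and $k_n < k_{n+1}$, $\nu_n < \nu_{n+1}$, is guaranteed by the explicit choices above. This completes the inductive step.
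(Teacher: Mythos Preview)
Your proposal is correct and follows the same approach as the paper: apply Lemma~\ref{lemma:largecommutator} with $f_i=\sigma_s^{(n)}$ to obtain parts (1)--(3), then use that $\Pi_{n+1}\le \Sym_f(G)$ is locally finite to deduce that $\langle \xi_{n+1}(X_{n+1})\rangle$ is finite and pick $k_{n+1}$ accordingly. The paper's proof is terser, but your added details (composing the ball isomorphism with the inductive hypothesis for $l<n$, and spelling out why $\xi_{n+1}(X_{n+1})\subset \Pi_{n+1}$) are exactly the points a careful reader would need to supply.
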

\begin{proof}
The proof that we can choose a suitable $\nu_{n+1}$ follows from an application of Lemma \ref{lemma:largecommutator}.
The resulting group $H_{n+1}$ is a subgroup of $\langle G,\Sym_f(G)\rangle\cong \Sym_f(G)\rtimes G$ and the group $\Pi_{n+1}$ is a subgroup of $\Sym_f(G)$.
Hence $\Pi_{n+1}$ is locally finite.
Therefore, we can also choose $k_{n+1}>k_n$ such that part $(4)$ holds.
\end{proof}

So by induction, our construction of all three sequences is complete.
By construction, it follows that for each $s\in S$, the limit $\lim_{n\to \infty} \sigma_s^{(n)}$ is defined in $\Sym(G)$. This is an involution that we denote by $\sigma_s^{(\infty)}$.
Recall that we denote: $$S_{\infty}=S\cup \{\sigma_s^{(\infty)}\mid s\in S\}\qquad H_{\infty}=\langle S_{\infty}\rangle\qquad \Pi_{\infty}=\langle \langle \{\sigma_s^{(\infty)}\mid s\in S\}\rangle \rangle_{H_{\infty}}$$
The following is an immediate consequence of the construction and Lemma \ref{Lem:Construction2}.

\begin{prop}\label{Prop:Construction}
For each $n\in \N$, we have a natural marked isomorphism: 
$$B_{k_n}(\Gamma(H_{\infty},S_{\infty}))\cong B_{k_n}(\Gamma(H_n,S_n))$$
In particular, $(H_{\infty},S_{\infty})$ is the limit of $(H_n,S_n)_{n\in \N}$ in the space of marked groups.
\end{prop}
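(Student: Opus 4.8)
The plan is to establish the marked isomorphism $B_{k_n}(\Gamma(H_{\infty},S_{\infty}))\cong B_{k_n}(\Gamma(H_n,S_n))$ for each fixed $n$ by showing that the ``tail'' of the construction, i.e. all steps beyond stage $n$, never disturbs the ball of radius $k_n$ around the identity in the Cayley graph. Concretely, I would iterate the first conclusion of Lemma~\ref{Lem:Construction2}: for each $q>n$ that lemma supplies a natural marked isomorphism $B_{k_n}(\Gamma(H_{q},S_q))\cong B_{k_n}(\Gamma(H_n,S_n))$ (take $l=n$ in its part (1)), and these isomorphisms are compatible with one another because they are all ``natural'' in the sense that they send the generator $s\in S$ to $s$ and $\sigma_s^{(q)}$ to $\sigma_s^{(n)}$. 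Thus the entire tail of balls $\big(B_{k_n}(\Gamma(H_q,S_q))\big)_{q\geq n}$ is a constant diagram up to these canonical identifications.

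\emph{The key step} is then to pass this stabilization to the limiting group $H_\infty$. Since for each $s\in S$ the limit $\sigma_s^{(\infty)}=\lim_{q\to\infty}\sigma_s^{(q)}$ exists in $\Sym(G)$ in the topology of pointwise convergence, and since a ball of radius $k_n$ in the Cayley graph is determined by the pattern of coincidences among finitely many words $\xi_\infty(W)$ of length at most $k_n$ evaluated on a finite portion of $G$, I would argue that for $q$ large enough the permutations $\sigma_s^{(q)}$ and $\sigma_s^{(\infty)}$ agree on the relevant finite region. More precisely, whether two words $W,W'$ of length $\leq k_n$ represent the same element of the group is detected by their action on the finitely many points of $G$ that lie within $S$-word-distance $k_n$ of any point moved by the generators appearing; once $q$ is large, $\sigma_s^{(\infty)}$ and $\sigma_s^{(q)}$ coincide on that finite set, so the word problem for length-$\leq k_n$ words has the same answer in $(H_\infty,S_\infty)$ as in $(H_q,S_q)$. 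Combining with the tail-stabilization of the previous paragraph yields $B_{k_n}(\Gamma(H_\infty,S_\infty))\cong B_{k_n}(\Gamma(H_n,S_n))$.

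\emph{The main obstacle} I anticipate is making rigorous the claim that the radius-$k_n$ ball of a Cayley graph is ``locally determined'' by the restriction of the generating permutations to a finite set, uniformly enough that pointwise convergence of the generators forces agreement of the balls. One must verify that no \emph{new} coincidence or \emph{new} non-coincidence between short words can be created in the limit: a coincidence $\xi_\infty(W)=\xi_\infty(W')$ must already hold at finite stage $q$ (this is the direction guaranteed by eventual agreement on a finite set), and conversely a non-coincidence at stage $q$ must persist to the limit (since the witnessing point is fixed once $q$ is large). Care is needed because the supports of the $\sigma_s^{(q)}$ march off to infinity as $q$ grows, so one should phrase the argument in terms of the action on the fixed finite region $B_{k_n}^v(\Gamma(G,S))$ together with the bounded piece of support relevant at stage $n$, rather than trying to track supports globally.

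\emph{For the final assertion}, that $(H_\infty,S_\infty)$ is the limit of $(H_n,S_n)_{n\in\N}$ in the space of marked groups, I would simply invoke the definition of convergence in the Grigorchuk space: since $k_n\to\infty$ and $B_{k_n}(\Gamma(H_\infty,S_\infty))\cong B_{k_n}(\Gamma(H_n,S_n))$ as marked balls, the marked groups $(H_n,S_n)$ agree with $(H_\infty,S_\infty)$ on arbitrarily large balls, which is precisely convergence to $(H_\infty,S_\infty)$.
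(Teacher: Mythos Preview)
Your proposal is correct and is exactly the natural unpacking of what the paper leaves unproved: the paper simply declares the proposition an immediate consequence of the construction and Lemma~\ref{Lem:Construction2}, and your two-step argument (tail stabilisation of the $k_n$-balls via part~(1) of that lemma, then passage to the limit via pointwise convergence of the generating permutations in $\Sym(G)$) is precisely how one makes that claim rigorous. Your identification of the only real subtlety—checking that no new coincidence or non-coincidence among short words is created in the limit—and your resolution of it are also sound.
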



\begin{prop}\label{noF2}
There is a short exact sequence: $$1\to \Pi_{\infty}\to H_{\infty}\to G\to 1$$
such that $\Pi_{\infty}$ is locally finite.
\end{prop}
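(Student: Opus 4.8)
The plan is to establish the short exact sequence by identifying the quotient map and analyzing its kernel. First I would construct the surjection $H_\infty \to G$. The natural candidate sends each $\sigma_s^{(\infty)}$ to the identity of $G$ and each $s \in S$ to itself; equivalently, it is the restriction to $H_\infty$ of the projection $\Sym_f(G) \rtimes G \to G$. To verify this is well-defined on $H_\infty$, I would use the limiting structure from Proposition \ref{Prop:Construction}: since $(H_\infty, S_\infty)$ is the marked limit of the $(H_n, S_n)$, and since each $H_n \leq \Sym_f(G) \rtimes G$ (by item $(2)$ of the inductive hypothesis) sits inside the semidirect product, the projection to $G$ is defined coherently at each finite stage. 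The key point is that each $\sigma_s^{(\infty)} = \lim_n \sigma_s^{(n)}$ lies in $\Sym_f(G)$'s closure but maps trivially to $G$, so the map descends. Surjectivity is immediate since $S \subseteq S_\infty$ generates $G$.

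Next I would identify the kernel with $\Pi_\infty$. By definition $\Pi_\infty = \langle\langle \{\sigma_s^{(\infty)} \mid s \in S\}\rangle\rangle_{H_\infty}$ is the normal closure of the generators that map to the identity, so $\Pi_\infty$ is contained in the kernel, and the induced map $H_\infty / \Pi_\infty \to G$ is a well-defined surjection. For the reverse inclusion, I would observe that modulo $\Pi_\infty$, every element of $H_\infty$ is represented by a word in $S$ alone, so the quotient $H_\infty/\Pi_\infty$ is generated by the images of $S$ and admits a surjection from $G$; combined with the surjection $H_\infty/\Pi_\infty \to G$ one gets that the kernel is exactly $\Pi_\infty$. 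Care is needed here to confirm no extra relations among the $S$-generators are forced in the quotient beyond those in $G$, which follows because the projection $H_\infty \to G$ is a genuine group homomorphism whose kernel we are computing.

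The main obstacle is proving that $\Pi_\infty$ is locally finite. Local finiteness means every finitely generated subgroup is finite, so it suffices to show every finite subset of $\Pi_\infty$ generates a finite group. The strategy is to transfer this from the finite stages: by construction (item $(3)$ of the inductive hypothesis and Lemma \ref{Lem:Construction2}), each $\Pi_n$ is a subgroup of $\Sym_f(G)$ and hence locally finite, and the exhaustion $(X_n)$ together with the choice of $k_{n+1}$ ensures that the finitely generated group $\langle \xi_n(X_n)\rangle$ is finite and that the relations witnessing this finiteness appear in $B_{k_n}(\Gamma(H_n,S_n))$. The crucial device is Proposition \ref{Prop:Construction}: the marked isomorphism $B_{k_n}(\Gamma(H_\infty,S_\infty)) \cong B_{k_n}(\Gamma(H_n,S_n))$ transports these finiteness-witnessing relations into $H_\infty$. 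Given a finite subset $F$ of $\Pi_\infty$, I would express it via the proxy words so that $F \subseteq \xi_\infty(X_n)$ for $n$ large enough; the marked isomorphism on a sufficiently large ball then shows $\langle \xi_\infty(X_n)\rangle$ satisfies the same presentation as the finite group $\langle \xi_n(X_n)\rangle$, forcing $\langle F \rangle$ to be finite.

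I expect the delicate bookkeeping to lie in matching the proxy-variable formalism $\xi_n$ with the normal closure structure of $\Pi_\infty$: one must check that every element of $\Pi_\infty$ is genuinely captured by some $\xi_\infty(W)$ with $W \in \Pi$, i.e. that the abstract normal closure in $F(P)$ specializes correctly under $\xi_\infty$ and that the exhaustion $(X_n)$ of $\Pi$ indeed exhausts $\Pi_\infty$ after specialization. Once this correspondence is nailed down, local finiteness is a direct consequence of the fact that finiteness is a local (ball-detectable) property preserved by the marked limit.
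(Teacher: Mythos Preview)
Your proposal is correct and the local-finiteness argument is exactly the paper's: use the marked ball isomorphism from Proposition~\ref{Prop:Construction} to transport the relations witnessing that $\langle\xi_n(X_n)\rangle$ is finite over to $H_\infty$, concluding that $\langle\xi_\infty(X_n)\rangle$ is finite for every $n$. The paper leaves the exact sequence itself implicit while you spell it out; one small wording issue is that $H_\infty$ is not literally contained in $\Sym_f(G)\rtimes G$ (the $\sigma_s^{(\infty)}$ are infinitely supported), but your marked-limit justification for the homomorphism $H_\infty\to G$ is the correct workaround and the rest goes through.
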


\begin{proof}
It suffices to show that for each $n\in \N$, the group $\langle \xi_{\infty}(X_n)\rangle$ is finite.
Applying Proposition \ref{Prop:Construction}, we have a natural marked isomorphism:
$$B_{k_n}(\Gamma(H_n,S_n))\cong B_{k_n}(\Gamma(H_{\infty},S_{\infty}))$$ 
We know from Lemma \ref{Lem:Construction2} that the group $\langle \xi_n(X_n)\rangle$ is finite, and the relations witnessing this lie in $B_{k_n}(\Gamma(H_n,S_n))$.
So we have the analogous relations in $B_{k_{n}}(\Gamma(H_{\infty},S_{\infty}))$ and hence conclude that $\langle \xi_{\infty}(X_n)\rangle$ is finite.
\end{proof}

\begin{prop}\label{Prop:HT}
The group $H_{\infty}$ acts highly transitively on $G$.
\end{prop}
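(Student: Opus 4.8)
The plan is to prove the formally stronger statement that $H_\infty$ is \emph{dense} in $\Sym(G)$ for the topology of pointwise convergence; as recalled in the introduction this is equivalent to high transitivity. Since the finitely supported permutations $\Sym_f(G)$ are dense in $\Sym(G)$ and $\overline{H_\infty}$ is a closed subgroup, it is enough to show $\Sym_f(G)\subseteq\overline{H_\infty}$. Moreover $\Sym_f(G)$ is generated by the transpositions $(e,g)$ through the identity $e$, and the set $D=\{g\in G:(e,g)\in\overline{H_\infty}\}$ is a subgroup of $G$: conjugating by elements of $G\leq H_\infty$ gives $g^{-1}(e,g)g=(e,g^{-1})$ and $g_1(e,g_2)g_1^{-1}=(g_1,g_1g_2)$, whence $(e,g_1)(g_1,g_1g_2)(e,g_1)=(e,g_1g_2)$, so $D$ is closed under inverses and products. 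Thus the whole proposition reduces to showing $(e,s)\in\overline{H_\infty}$ for each $s\in S$: since $S$ generates $G$ this forces $D=G$, hence $\overline{H_\infty}$ contains every transposition and therefore all of $\Sym_f(G)$.

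To analyze $(e,s)$ I would first record the explicit form of the limit involution. Unwinding the recursion $\sigma_s^{(n+1)}=z^{\nu_{n+1}}\sigma_s^{(n)}z^{-\nu_{n+1}}\sigma_s^{(n)}$ from $\sigma_s^{(0)}=(e,s)$ shows that $\sigma_s^{(n)}$ is the product of the disjoint transpositions $(z^m,z^m s)$ as $m$ ranges over the subset sums of $\nu_1,\dots,\nu_n$, and hence in the limit $\sigma_s^{(\infty)}$ is the product of $(z^m,z^m s)$ over the set $M$ of all finite subset sums of $(\nu_i)_{i\geq 1}$, the term $m=0$ being exactly $(e,s)$. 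The essential feature, inherited from the fact that $z$ is central and all $\nu_i$ are positive, is that every index in $M$ is nonnegative, so the support of $\sigma_s^{(\infty)}$ sits only at ``levels'' $m\geq 0$ and every unwanted transposition ($m\in M$, $m>0$) lies strictly above $(e,s)$.

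The key step is then to realize $(e,s)$ as a pointwise limit of elements of $H_\infty$ by cancelling these unwanted transpositions from the bottom up inside larger and larger balls. For any integer $b\geq 0$ the conjugate $z^{b}\sigma_s^{(\infty)}z^{-b}\in H_\infty$ is again a product of transpositions $(z^{b+m},z^{b+m}s)$, $m\in M$; its lowest one, at level $b$, is the single transposition $(z^{b},z^{b}s)$, while all its remaining transpositions lie strictly above level $b$. Given a radius $N$, I would start from $\sigma_s^{(\infty)}$ and repeatedly multiply by the conjugate $z^{m^*}\sigma_s^{(\infty)}z^{-m^*}$, where $m^*$ is the lowest level in $(0,N]$ still carrying an unwanted transposition; since every transposition appearing is the $z$-translate $(z^{\ell},z^{\ell}s)$ of $(e,s)$, each such multiplication deletes the transposition at level $m^*$ and introduces new ones only at strictly higher levels. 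Because only finitely many levels lie below $N$ and the lowest surviving level strictly increases at each step, the procedure terminates after finitely many multiplications in an element $h_N\in H_\infty$ that agrees with $(e,s)$ on the ball $B_N(\Gamma(G,S))$, all residual support having been pushed above level $N$. Letting $N\to\infty$ gives $h_N\to(e,s)$ pointwise, so $(e,s)\in\overline{H_\infty}$, completing the reduction and hence the proof.

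The main obstacle, and the place where the construction is genuinely used, is this last step: it works precisely because the support set $M$ is one-sided (contained in $\Z_{\geq 0}$), so that cancelling the lowest bad transposition can never create a lower one, which is what guarantees termination inside each fixed ball. This one-sidedness is exactly what the doubling $\sigma_s^{(n+1)}=z^{\nu_{n+1}}\sigma_s^{(n)}z^{-\nu_{n+1}}\sigma_s^{(n)}$ together with the positivity of the $\nu_i$ (Lemma \ref{lemma:largecommutator}) produces; a construction spreading support symmetrically in $z$ could fail to terminate. The remaining points are routine: the finitely many conjugates used for a given $N$ plainly lie in $H_\infty$ (as $z,\sigma_s^{(\infty)}\in H_\infty$), and ``agreement on $B_N$'' is read off correctly once the bounded $S_0$-part of each transposition's support is accounted for.
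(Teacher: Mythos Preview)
Your proof is correct and follows essentially the same approach as the paper: both arguments observe that $\sigma_s^{(\infty)}$ is a one-sided product of the transpositions $(z^m,z^ms)$ with $m\geq 0$ and then cancel the unwanted transpositions from the bottom up to approximate $(e,s)$ inside $H_\infty$; the paper packages the cancellation as Lemma~\ref{specialelements1} (using $\sigma_{s,n+1}=z^{n+1}\sigma_{s,n}z^{-(n+1)}\sigma_{s,n}$ rather than your conjugates of $\sigma_s^{(\infty)}$ itself) and replaces your subgroup-of-$G$ reduction by the direct conjugation step of Lemma~\ref{specialelements2}. One small point worth making explicit in your write-up is the case $s=z$: there the transpositions $(z^\ell,z^{\ell+1})$ are disjoint only because every $\nu_i$ is even (item~(5) of the construction), so all levels in $M$---and hence all the $m^*$ you conjugate by---stay even throughout the cancellation.
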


In order to prove this, we will need the following Lemmas as ingredients.
\begin{lemma}\label{specialelements1}
For each $s\in S$, there is a sequence of elements $(\sigma_{s,n})_{n\in \N}$ in $H_{\infty}$ satisfying that $\sigma_{s,n}$ is a product of 
the involution $(id,s)$ and some subset of the set of involutions $\{(z^m,z^ms)\mid m\geq n\}$.
\end{lemma}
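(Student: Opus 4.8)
The plan is to first identify $\sigma_s^{(\infty)}$ explicitly as an infinite product of disjoint transpositions, and then to manufacture the $\sigma_{s,n}$ from it by a ``shift-and-cancel'' operation that pushes the support upward while preserving the distinguished transposition $(id,s)$.

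First I would unwind the recursion $\sigma_s^{(n+1)}=z^{\nu_{n+1}}\sigma_s^{(n)}z^{-\nu_{n+1}}\sigma_s^{(n)}$ starting from $\sigma_s^{(0)}=(id,s)=(z^0,z^0s)$. Since left multiplication by $z^{\nu}$ carries the transposition $(z^m,z^ms)$ to $(z^{m+\nu},z^{m+\nu}s)$, an easy induction (using Lemma \ref{lemma:largecommutator} to guarantee disjointness of the two halves at each stage) shows that
$$\sigma_s^{(n)}=\prod_{m\in T_n}(z^m,z^ms),\qquad T_n=\Big\{\textstyle\sum_{i\in I}\nu_i : I\subseteq\{1,\dots,n\}\Big\},$$
the set of subset sums; these are nested, $T_n\subseteq T_{n+1}$. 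Passing to the pointwise limit gives $\sigma_s^{(\infty)}=\prod_{m\in T}(z^m,z^ms)$ with $T=\bigcup_n T_n$ the set of all finite subset sums of $\{\nu_i\}$. Two features of $T$ are crucial: it contains $0$ (so $(id,s)$ is a factor), and by condition $(5)$ every $\nu_i$ is even, hence every element of $T$ is even.

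The heart of the argument is the following \emph{move}. Given any $C=\prod_{m\in \mathcal{S}}(z^m,z^ms)\in H_\infty$ which is a product of pairwise disjoint transpositions indexed by a set $\mathcal{S}$ of even integers with $0\in\mathcal{S}$, and any even $\delta>0$, I would form
$$C\cdot z^{\delta}Cz^{-\delta}=\prod_{m\in\, \mathcal{S}\,\triangle\,(\mathcal{S}+\delta)}(z^m,z^ms),$$
which again lies in $H_\infty$. This identity holds because all the transpositions involved have the form $(z^m,z^ms)$ with $m$ even: for $s\in S_0$ they are automatically disjoint, while for $s=z$ evenness forces any two distinct exponents to differ by at least $2$ and so keeps $(z^m,z^{m+1})$ disjoint from its neighbours. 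Thus all factors commute and the repeated ones (those $m\in\mathcal{S}\cap(\mathcal{S}+\delta)$) cancel in pairs. Choosing $\delta$ to be the smallest nonzero element of $\mathcal{S}$, one checks that $0$ survives (as $-\delta\notin\mathcal{S}$) while the exponent $\delta$ itself is cancelled and no new exponent below $\delta$ is created; hence the new support again consists of even integers, still contains $0$, and has strictly larger smallest nonzero element.

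Finally I would iterate this move, starting from $C=\sigma_s^{(\infty)}$. Each application keeps us inside $H_\infty$, retains $(id,s)$ as the unique exponent-$0$ factor, preserves the transposition form, and strictly raises the smallest nonzero exponent; after finitely many steps that exponent exceeds $n$, and the resulting element is the desired $\sigma_{s,n}$. The one point demanding care — and essentially the only place the hypotheses of the construction are used — is the disjointness bookkeeping that legitimises the cancellation, in particular the case $s=z$, which is exactly why the evenness of the $\nu_i$ (condition $(5)$) was built into the construction; the magnitude conditions from Lemma \ref{lemma:largecommutator} are what ensure that the initial infinite product $\sigma_s^{(\infty)}$ is genuinely a product of disjoint transpositions to begin with.
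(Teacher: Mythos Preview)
Your proposal is correct and follows essentially the same approach as the paper: start from $\sigma_s^{(\infty)}$ and iterate the conjugate-and-multiply move $C\mapsto C\cdot z^{\delta}Cz^{-\delta}$ to push the nonzero support exponents upward while retaining the factor $(id,s)$, with the evenness of the $\nu_i$ handling the commutation bookkeeping in the case $s=z$. The only cosmetic difference is that you shift by the current smallest nonzero exponent, whereas the paper shifts by the running index $n+1$; your version makes the symmetric-difference description of the support explicit and is arguably a bit cleaner.
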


\begin{proof}
Recall that $S=S_0\cup \{z\}$.
First we handle the case when $s\in S_0$.
Recall that for each $s\in S_0$, $\sigma_s^{(\infty)}$ is a product of a certain collection of disjoint involutions of the form $(z^n,z^ns)$.
We shall construct the sequence inductively, with the base case as $\sigma_{s,0}=\sigma_s^{(\infty)}$.
Assume that we have constructed $\sigma_{s,n}\in H_{\infty}$.
If $\sigma_{s,n}$ fixes $z^{n+1},z^{n+1}s$ then we simply declare $\sigma_{s,n+1}=\sigma_{s,n}$.
If not, then $\sigma_{s,n}$ switches $z^{n+1},z^{n+1}s$.
We let $\sigma_{s,n+1}=z^{(n+1)}\sigma_{s,n}z^{-(n+1)}\sigma_{s,n}$.
It is easy to check that this satisfies the required condition.

In the case when $s=z$, note that $\sigma_z^{(\infty)}$ is a product of a certain collection of disjoint involutions of the form 
$(z^n,z^{n+1})$ where $n$ is an even number. We shall construct the sequence inductively, with the base case as $\sigma_{z,0}=\sigma_z^{(\infty)}$.
Assume that we have constructed $\sigma_{z,n}\in H_{\infty}$ with the additional property that it is a product of a certain collection of disjoint involutions of the form 
$(z^n,z^{n+1})$ where $n$ is an even number. If $\sigma_{z,n}$ fixes $z^{n+1},z^{n+2}$ then we simply declare $\sigma_{z,n+1}=\sigma_{z,n}$.
If not, then $\sigma_{z,n}$ switches $z^{n+1},z^{n+2}$.
We let $\sigma_{z,n+1}=z^{(n+1)}\sigma_{z,n}z^{-(n+1)}\sigma_{z,n}$.
It is easy to check that this satisfies the required condition.
\end{proof}

\begin{lemma}\label{specialelements2}
For each $n\in \N,s\in S$, and each $f\in B_n^v(\Gamma(G,S))$ so that $fs\in B_n^v(\Gamma(G,S))$, there is an element $\sigma_{s,f,n}\in H_{\infty}$ such that 
the restriction: $$\sigma_{s,f,n}\restriction B_n^v(\Gamma(G,S))$$ is the involution switching the elements $\{f,fs\}$ and pointwise fixing: $$B_n^v(\Gamma(G,S))\setminus \{f,fs\}$$
\end{lemma}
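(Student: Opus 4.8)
The plan is to build $\sigma_{s,f,n}$ by conjugating one of the elements supplied by Lemma \ref{specialelements1} by the left-translation $L_f\colon x\mapsto fx$. Since $f\in B_n^v(\Gamma(G,S))\subseteq G\leq H_{\infty}$ and $G$ sits inside $H_{\infty}$ via the left regular action, $L_f$ is an element of $H_{\infty}$. By Lemma \ref{specialelements1} we may pick, for a threshold $N$ to be chosen, an element $\sigma_{s,N}\in H_{\infty}$ that is a product of the transposition $(\mathrm{id},s)$ together with some subcollection of the pairwise disjoint transpositions $\{(z^m,z^ms)\mid m\geq N\}$. I would then set $\sigma_{s,f,n}:=L_f\,\sigma_{s,N}\,L_f^{-1}$, which lies in $H_{\infty}$ because $H_{\infty}$ is a group containing both factors.

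Next I would unwind what conjugation does. Conjugating a transposition $(a,b)\in\Sym(G)$ by $L_f$ produces the transposition $(fa,fb)$, so $\sigma_{s,f,n}$ is the product of the transposition $(f,fs)$ with a subcollection of $\{(fz^m,fz^ms)\mid m\geq N\}$. The transposition $(f,fs)$ is exactly the one we want to see on $B_n^v(\Gamma(G,S))$, and $f,fs$ both lie in $B_n^v(\Gamma(G,S))$ by hypothesis. Hence the entire content reduces to showing that, for a suitable $N$, every \emph{extra} transposition $(fz^m,fz^ms)$ with $m\geq N$ has \emph{both} endpoints outside $B_n^v(\Gamma(G,S))$; such a transposition then fixes $B_n^v(\Gamma(G,S))$ pointwise, and the restriction of $\sigma_{s,f,n}$ is precisely $(f,fs)$ as required.

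The one real step is this word-length estimate, and it is where the central infinite-order element $z$ does the work. Let $\pi\colon G\to\Z$ be the projection onto the $\langle z\rangle=\Z$ factor, so $\pi(z)=1$ and $\pi$ kills $G_0$. Since every element of $S$ maps under $\pi$ to an integer of absolute value at most $1$, the triangle inequality gives $|\pi(g)|\leq |g|_S$ for all $g\in G$; in particular $|\pi(f)|\leq n$. Then $\pi(fz^m)=\pi(f)+m\geq m-n$ and $\pi(fz^ms)=\pi(f)+m+\pi(s)\geq m-n-1$. Choosing $N=2n+2$ forces both quantities to exceed $n$ for all $m\geq N$, whence $|fz^m|_S>n$ and $|fz^ms|_S>n$, so neither endpoint lies in $B_n^v(\Gamma(G,S))$. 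This completes the verification. I do not anticipate a genuine obstacle here: once the problem is phrased as conjugating $\sigma_{s,N}$ by $L_f$, everything collapses to the projection estimate, which holds precisely because translating by $z^m$ pushes points arbitrarily far in $S$-word norm while $f$ stays within norm $n$.
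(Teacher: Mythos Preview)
Your argument is correct and follows exactly the same route as the paper: define $\sigma_{s,f,n}=f\,\sigma_{s,m}\,f^{-1}$ for a sufficiently large $m$, using Lemma~\ref{specialelements1}. The paper leaves the phrase ``sufficiently large'' unquantified, whereas you spell out the word-length estimate via the projection $\pi\colon G\to\Z$ and the explicit threshold $N=2n+2$; this is a welcome elaboration but not a different approach.
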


\begin{proof}
Let $(\sigma_{s,n})_{n\in \N}$ be the sequence of elements constructed in Lemma \ref{specialelements1}.
For a sufficiently large $m\in \N$, the element $\sigma_{s,f,n}=f\sigma_{s,m}f^{-1}$ satisfies the required conditions.
\end{proof}

\begin{proof}[Proof of Proposition \ref{Prop:HT}]
For $f\in G, s\in S$, let $\beta_{s,f}$ be the involution $(f,fs)$. 
(We remark that $\beta_{s,f}\notin H_{\infty}$, which will be proved subsequently in Proposition \ref{MIF}).
For $n\in \N$, the set of involutions: $$\{\beta_{s,f}\mid f,fs\in B_n^v(\Gamma(G,S)), s\in S\}$$ generate the group of permutations supported on $B_n(\Gamma(G,S))$.
Let $\sigma_{s,f,n}\in H_{\infty}$ be the elements emerging from Lemma \ref{specialelements2}.
Indeed, the group generated by: $$\{\sigma_{s,f,n}\mid f,fs\in B_n^v(\Gamma(G,S))\}$$ leaves $B_n^v(\Gamma(G,S))$ invariant. Moreover: 
$$\sigma_{s,f,n}\restriction B_n^v(\Gamma(G,S))=\beta_{s,f}\restriction B_n^v(\Gamma(G,S))$$ So the restriction of the group generated by: $$\{\sigma_{s,f,n}\mid f,fs\in B_n^v(\Gamma(G,S))\}$$ to $B_n^v(\Gamma(G,S))$ is precisely the action of the group of permutations supported on $B_n^v(\Gamma(G,S))$ and hence acts maximally transitively on the finite set $B_n^v(\Gamma(G,S))$. Therefore, our group $H_{\infty}$ satisfies the desired transitivity properties.
\end{proof}

Our final goal is to show the following.

\begin{prop}\label{MIF}
The group $H$ is mixed identity free.
\end{prop}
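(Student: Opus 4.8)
The plan is to deduce mixed identity freeness from the Hull--Osin dichotomy recalled in the introduction. Since $H_\infty\le\Sym(G)$ acts faithfully, and acts highly transitively on $G$ by Proposition \ref{Prop:HT}, that dichotomy asserts that either $H_\infty$ contains the finitary alternating group $\textup{Alt}_f(G)$ as a normal subgroup, or $H_\infty$ is mixed identity free. Thus the whole task reduces to showing that $\textup{Alt}_f(G)\not\le H_\infty$. Write $\phi_\infty\colon H_\infty\to G$ for the quotient map of Proposition \ref{noF2}, with kernel $\Pi_\infty$.

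First I would record that $\Pi_\infty$ itself already acts highly transitively on $G$. Indeed, the elements $\sigma_{s,f,n}$ produced in Lemmas \ref{specialelements1} and \ref{specialelements2}, which witness high transitivity on each ball $B_n^v(\Gamma(G,S))$, are obtained from the $\sigma_s^{(\infty)}$ purely by conjugating and multiplying inside $H_\infty$; as $\Pi_\infty$ is normal, all of them lie in $\Pi_\infty$. Hence $\Pi_\infty$ is $2$-transitive and not regular, so its centralizer in $\Sym(G)$ is trivial: $C_{\Sym(G)}(\Pi_\infty)=\{1\}$.

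Now suppose for contradiction that $\textup{Alt}_f(G)\le H_\infty$. Being normal in $\Sym(G)$, it is normal in $H_\infty$; being simple, its image $\phi_\infty(\textup{Alt}_f(G))$ is either trivial or isomorphic to $\textup{Alt}_f(G)$. If the image is nontrivial, then $\textup{Alt}_f(G)\cap\Pi_\infty$ is a proper, hence trivial, normal subgroup of the simple group $\textup{Alt}_f(G)$; two normal subgroups of $H_\infty$ with trivial intersection centralize one another, so $\textup{Alt}_f(G)\le C_{\Sym(G)}(\Pi_\infty)=\{1\}$, a contradiction. It remains to exclude the case $\textup{Alt}_f(G)\le\Pi_\infty$.

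This last case is the heart of the matter, and rests on the following key lemma, which I regard as the main obstacle: $\Pi_\infty$ contains no nontrivial finitely supported permutation, i.e. $\Pi_\infty\cap\Sym_f(G)=\{1\}$. Granting this, $\Pi_\infty$ cannot contain the (finitely supported) $3$-cycles of $\textup{Alt}_f(G)$, closing the case; combined with the previous paragraph this gives $\textup{Alt}_f(G)\not\le H_\infty$, and the dichotomy yields that $H_\infty$ is mixed identity free (and recovers the assertion in the proof of Proposition \ref{Prop:HT} that the $\beta_{s,f}$ are not in $\Pi_\infty$). To prove the lemma I would exploit the explicit form of the generators: $\sigma_s^{(\infty)}$ is the product of the disjoint transpositions $(z^a,z^as)$, respectively $(z^a,z^{a+1})$ when $s=z$, as $a$ ranges over the set $A_s\subseteq\Z$ of subset sums of the chosen sequence $(\nu_n)$. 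Every element of $\Pi_\infty$ is a finite product of $H_\infty$-conjugates of these involutions, hence a product of transpositions along $S$-edges whose basepoints lie in finitely many translates of the sparse sets $A_s$. For such a product to be finitely supported, all of its infinitely many ``tails'' must cancel; the decisive point is that, by taking each $\nu_n$ large enough in Lemmas \ref{lemma:largecommutator} and \ref{Lem:Construction2}, one can force the $A_s$ to be so lacunary and aperiodic that no nontrivial finite combination of their translates is finitely supported. Carrying out this cancellation analysis carefully --- tracking how conjugation by $G_0$-translations and by the other involutions $\sigma_{s'}^{(\infty)}$ moves basepoints within and between the cosets of $G_0$ --- is the technical crux, and is the only place where the precise, freely adjustable growth of the $\nu_n$ is used.
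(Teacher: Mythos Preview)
Your use of the Hull--Osin dichotomy and the reduction to excluding finitely supported permutations is exactly right, and your preliminary steps (that $\Pi_\infty$ acts highly transitively, has trivial centralizer in $\Sym(G)$, and that simplicity of $\textup{Alt}_f(G)$ then forces $\textup{Alt}_f(G)\le\Pi_\infty$) are correct but superfluous: the paper simply shows $H_\infty\cap\Sym_f(G)=\{id\}$ directly, which subsumes your target $\Pi_\infty\cap\Sym_f(G)=\{1\}$ and makes the centralizer detour unnecessary.

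The genuine gap is at what you yourself call the ``technical crux'': you have not proved the key lemma, and the sketched ``cancellation analysis'' is both vague and far harder than what is actually needed. You propose to track supports under conjugation ``by $G_0$-translations and by the other involutions $\sigma_{s'}^{(\infty)}$'', but elements of $\Pi_\infty$ involve conjugation by \emph{arbitrary} words in $S_\infty$, and controlling the resulting combinatorics of the subset-sum sets $A_s$ under such conjugations is a project you have not begun. The paper bypasses all of this with Lemma~\ref{Lem:ActionApprox} together with property~(3) of Lemma~\ref{Lem:Construction2}. If $f\in H_\infty\cap\Sym_f(G)$ is nontrivial (after conjugating by an element of $G$, arrange $f\cdot id\neq id$) and $W$ is an $S_\infty$-word representing $f$, then for all sufficiently large $m$ one has
\[
W\cdot z^{\nu_m}\;=\;\eta_m(W)\cdot z^{\nu_m}\;=\;z^{\nu_m}\bigl(\eta_m(W)\cdot id\bigr)\;=\;z^{\nu_m}(W\cdot id)\;\neq\;z^{\nu_m},
\]
so $f$ moves $z^{\nu_m}$ for infinitely many $m$, contradicting finite support. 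No lacunarity or aperiodicity analysis of the $A_s$ is required beyond what the construction already guarantees; the idea you are missing is precisely this built-in self-similarity of the action at the points $z^{\nu_m}$.
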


Given a word $W$ in the generating set $S_{\infty}=S\cup \{\sigma_s^{(\infty)}\mid s\in S\}$, we denote by $\eta_n(W)$ as the word obtained by replacing each $(\sigma_s^{(\infty)})^{\pm 1}$ by $(\sigma_s^{(n)})^{\pm 1}$ and keeping the generators of $S$ the same.
Note that this is now a word in the generating set $S_n$.
The following Lemma is a direct consequence of our construction.
\begin{lemma}\label{Lem:ActionApprox}
Given an $S_{\infty}$-word $W$, there is an $n\in \N$ such that for each $m\geq n$, we have that:
$$\eta_m(W)\cdot id=W\cdot id\qquad \eta_m(W)\cdot z^{\nu_m}=W\cdot z^{\nu_m}$$
\end{lemma}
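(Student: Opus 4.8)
The plan is to derive both equalities from a single principle: the permutation $\eta_m(W)$ and the permutation $W$ (acting through $\sigma_s^{(m)}$ and $\sigma_s^{(\infty)}$ respectively) drag a given base point $p$ along the \emph{same} trajectory, provided $\sigma_s^{(m)}$ and $\sigma_s^{(\infty)}$ agree at every point that trajectory visits. Since $\eta_m(W)$ and $W$ share the same $S$-letters and differ only through the substitution $\sigma_s^{(m)}\leftrightarrow\sigma_s^{(\infty)}$, the action of a word at a point is determined letter-by-letter along the orbit, and the two orbits coincide by induction once the $\sigma$-letters agree at each visited point. This reduces the lemma to two inputs: a \emph{localization} of where these involutions can disagree, and a bound on how far the orbit of $p$ can wander. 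I will take $p=\mathrm{id}$ for the first equality and $p=z^{\nu_m}$ for the second.

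For the localization step, write $h(g)\in\bZ$ for the $z$-exponent of $g\in G=G_0\oplus\langle z\rangle$. The claim is that $\sigma_s^{(\infty)}$ and $\sigma_s^{(m)}$ agree on $\{g:h(g)<\nu_{m+1}\}$. This is read off the recursion $\sigma_s^{(m')}=z^{\nu_{m'}}\sigma_s^{(m'-1)}z^{-\nu_{m'}}\sigma_s^{(m'-1)}$: the support of $\sigma_s^{(m'-1)}$ lies at $z$-exponents at most $\sum_{i<m'}\nu_i<\nu_{m'}$ — this is precisely the disjointness that Lemma~\ref{lemma:largecommutator} enforces when choosing $\nu_{m'}$ — whereas the conjugated factor $z^{\nu_{m'}}\sigma_s^{(m'-1)}z^{-\nu_{m'}}$ is supported at exponents $\geq\nu_{m'}$. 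Hence on $\{h<\nu_{m'}\}$ the new factor is trivial and $\sigma_s^{(m')}=\sigma_s^{(m'-1)}$ there. Iterating over $m'>m$ and passing to the pointwise limit gives $\sigma_s^{(\infty)}=\sigma_s^{(m)}$ on $\{h<\nu_{m+1}\}$.

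For the wandering bound I use the explicit cycle description of $\sigma_s^{(\infty)}$ from Lemma~\ref{specialelements1}: each such involution is a product of disjoint transpositions of the form $(z^a,z^as)$ with $s\in S_0$ (which preserve $h$) or $(z^a,z^{a+1})$ with $s=z$ (which change $h$ by $\pm1$), while the left-regular $S$-letters change $h$ by $0$ (for $s\in S_0$) or $\pm1$ (for $s=z$). Thus every letter of $S_\infty^{\pm1}$ alters the $z$-exponent of a point by at most $1$, so if $\ell$ is the length of $W$ then the entire orbit of a base point $p$ under $W$ stays in the band $\{|h-h(p)|\leq\ell\}$; the same holds for $\eta_m(W)$.

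Finally I combine the two inputs. Because $\nu_{m+1}>\sum_{i\leq m}\nu_i$, the gaps $\nu_{m+1}-\nu_m$ tend to infinity, so I may fix $n$ with $\nu_{m+1}-\nu_m>\ell$ for all $m\geq n$. For such $m$ the orbit of $\mathrm{id}$ lies in $\{|h|\leq\ell\}\subseteq\{h<\nu_{m+1}\}$ and the orbit of $z^{\nu_m}$ lies in $\{|h-\nu_m|\leq\ell\}\subseteq\{h<\nu_{m+1}\}$, both inside the agreement region; since $\eta_m(W)$ and $W$ act identically along these orbits, their endpoints coincide, yielding $\eta_m(W)\cdot\mathrm{id}=W\cdot\mathrm{id}$ and $\eta_m(W)\cdot z^{\nu_m}=W\cdot z^{\nu_m}$. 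The step that genuinely needs care — the main obstacle — is the second point: unlike $\mathrm{id}$, the base point $z^{\nu_m}$ recedes to infinity with $m$, so the convergence $\sigma_s^{(m)}\to\sigma_s^{(\infty)}$ is by itself useless there, and one must instead exploit the quantitative fact that the structure freshly introduced after stage $m$ sits at height $\geq\nu_{m+1}$, a safe distance $\nu_{m+1}-\nu_m>\ell$ above the band that $z^{\nu_m}$ can reach.
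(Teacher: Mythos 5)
The paper never actually writes out a proof of this lemma: it is introduced with the single sentence ``The following Lemma is a direct consequence of our construction,'' so your argument is precisely the filler the authors leave implicit, and it is essentially correct. Your three ingredients --- the letter-by-letter comparison of the two trajectories, the localization of the disagreement between $\sigma_s^{(m)}$ and $\sigma_s^{(\infty)}$ to $z$-heights at least $\nu_{m+1}$ (the fresh factor $z^{\nu_{m'}}\sigma_s^{(m'-1)}z^{-\nu_{m'}}$ introduced at stage $m'>m$ is supported at heights at least $\nu_{m'}$ and is disjoint from the old support), and the band estimate that every letter of $S_{\infty}^{\pm 1}$ changes the $z$-exponent by at most $1$ --- are all sound, and you correctly identify that the second equality is the substantive point: pointwise convergence $\sigma_s^{(m)}\to\sigma_s^{(\infty)}$ settles the first equality but says nothing at the receding basepoint $z^{\nu_m}$, which is exactly why Proposition \ref{MIF} needs this quantitative support-location argument alongside Lemma \ref{Lem:Construction2}(3).

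One inaccuracy should be fixed. You assert that the inequality $\nu_{m'}>\sum_{i<m'}\nu_i$ ``is precisely the disjointness that Lemma \ref{lemma:largecommutator} enforces.'' The implication goes the other way: that inequality is a \emph{sufficient} condition for $(z^{\nu_{m'}}\cdot\Supp(\sigma_s^{(m'-1)}))\cap\Supp(\sigma_s^{(m'-1)})=\emptyset$, but disjointness can hold with much smaller $\nu_{m'}$ (the translate merely has to avoid the difference set of the support heights), so nothing in the written construction literally guarantees $\nu_{m+1}>\sum_{i\leq m}\nu_i$, nor even that the gaps $\nu_{m+1}-\nu_m$ tend to infinity --- and your proof of the second equality genuinely needs the latter, since you require $\nu_m+\ell<\nu_{m+1}$ for all large $m$. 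The repair is trivial and should be stated honestly: Lemma \ref{Lem:Construction2} permits $\nu_{m+1}$ to be \emph{any} sufficiently large even number, so one may add to the inductive construction the extra requirement $\nu_{m+1}>\sum_{i\leq m}\nu_i$, after which your argument is complete. Note also that for the localization step you do not actually need your upper bound $\sum_{i<m'}\nu_i$ on the old support (which is in any case off by $1$ for $s=z$, where heights reach $\sum_{i<m'}\nu_i+1$); you only need that the fresh factor sits at heights at least $\nu_{m'}$, which is automatic because all supports lie at nonnegative heights, together with the disjointness that the construction does enforce.
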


\begin{proof}[Proof of Proposition \ref{MIF}]
Recall that Hull and Osin showed that if a countable group has a highly transitive faithful action on $\N$, then either the group contains the alternating group of finitely supported permutations on $\N$ as a normal subgroup, or it is mixed identity free.
(Theorem $5.9$ in \cite{HullOsin}).
So it suffices to show that $H_{\infty}\cap \textup{Sym}_f(G)=\{id\}$.
Assume otherwise. 
Then there is a finitely supported permutation $f\in H_{\infty}$ such that $f\cdot id\neq id$ and $\Supp(f)\subset B_l^v(\Gamma(G,S))$ for some $l\in \N$.   
Let $W$ be an $S_{\infty}$-word that represents $f$.
By Lemma \ref{Lem:ActionApprox} there is an $n\in \N$ such that for each $m\geq n$, we have that:
$$\eta_m(W)\cdot id=W\cdot id\qquad \eta_m(W)\cdot z^{\nu_m}=W\cdot z^{\nu_m}$$
Then by Lemma \ref{Lem:Construction2}: $$W\cdot z^{\nu_m}=\eta_m(W)\cdot z^{\nu_m}=z^{\nu_m}(\eta_m(W)\cdot id)\neq z^{\nu_m}$$
Since $z$ has infinite order and $(\nu_n)_{n\in \N}$ is an increasing sequence, this means that $\Supp(f)\nsubseteq B_l^v(\Gamma(G,S))$ for some $l\in \N$, a contradiction.
\end{proof}

\section*{Acknowledgments}
The second author is supported by NSF Grant DMS-2240136.
The authors thank Nicolas Monod and Denis Osin for valuable feedback. 
The authors also thank Srivatsav Kunnawalkam Elayavalli and Francesco Fournier-Facio for stimulating conversations, and for informing us that it is not known whether there is a 
nonamenable group without free subgroups that is highly transitive and mixed identity free.

\bibliographystyle{amsalpha}
\bibliography{bib}

\normalsize

\vspace{0.5cm}

James Hyde.

\noindent{\textsc{Department of Mathematics,
Binghamton University.}}

\noindent{\textit{E-mail address:} \texttt{jameshydemaths@gmail.com}}

Yash Lodha.

\noindent{\textsc{Department of Mathematics, Purdue University}}

\noindent{\textit{E-mail address:} \texttt{ylodha@purdue.edu}} 



\end{document}